\newtheorem{propo}{{\bf Proposition}}[section]
\newtheorem{coro}[propo]{{\bf Corollary}}
\newtheorem{lemma}[propo]{{\bf Lemma}} \newtheorem{theor}[propo]{{\bf
Theorem}} \newtheorem{ex}{{\sc Example}}[section]
\newenvironment{proof}{{\bf Proof.}}{$\Box$}
\def\R{{\mathbb R}}
\begin{document}

\vspace*{1.0in}

\begin{center} MAXIMAL SUBALGEBRAS AND CHIEF FACTORS OF LIE ALGEBRAS 
\end{center}
\bigskip

\begin{center} DAVID A. TOWERS 
\end{center}
\bigskip

\begin{center} Department of Mathematics and Statistics

Lancaster University

Lancaster LA1 4YF

England

d.towers@lancaster.ac.uk 
\end{center}
\bigskip

\begin{abstract}
This paper is a continued investigation of the structure of Lie algebras in relation to their chief factors, using concepts that are analogous to corresponding ones in group theory. The first section investigates the structure of Lie algebras with a core-free maximal subalgebra. The results obtained are then used in section two to consider the relationship of two chief factors of $L$ being $L$-connected, a weaker equivalence relation on the set of chief factors than that of being isomorphic as $L$-modules. A strengthened form of the Jordan-H\"older Theorem in which Frattini chief factors correspond is also established for every Lie algebra. The final section introduces the concept of a crown, a notion introduced in group theory by Gasch\"utz, and shows that it gives much information about the chief factors
\par 

\noindent {\em Mathematics Subject Classification 2000}: 17B05, 17B20, 17B30, 17B50.
\par

\noindent {\em Key Words and Phrases}: Lie algebras, maximal subalgebra, core-free, chief factor, crown, prefrattini subalgebras. 
\end{abstract}

\section{Primitive algebras}
Throughout $L$ will denote a finite-dimensional Lie algebra over a field $F$. The symbol `$\oplus$' will denote an algebra direct sum, whilst `$\dot{+}$' will denote a direct sum of the underlying vector space structure alone. If $U$ is a subalgebra of $L$ we define $U_L$, the {\em core} (with respect to $L$) of $U$ to be the largest ideal of $L$ contained in $U$. We say that $U$ is {\em core-free} in $L$ if $U_L = 0$. We shall call $L$ {\em primitive} if it has a core-free maximal subalgebra. The {\em centraliser} of $U$ in $L$ is $C_L(U)= \{x \in L : [x,U]=0 \}$. Then we have the following characterisation of primitive Lie algebras.
\bigskip

\begin{theor}\label{t:class} 
\begin{itemize}
\item[1.]  A Lie algebra $L$ is primitive if and only if there exists a subalgebra $M$ of $L$ such
that $L = M+A$ for all minimal ideals $A$ of $L$.
\item[2.]  Let $L$ be a primitive Lie algebra. Assume that $U$ is a core-free maximal subalgebra
of $L$ and that $A$ is a non-trivial ideal of $L$. Write $C = C_L(A)$.
Then $C \cap U = 0$. Moreover, either $C = 0$ or $C$ is a minimal ideal of $L$.
\item[3.]  If L is a primitive Lie algebra and $U$ is a core-free maximal subalgebra of $L$,
then exactly one of the following statements holds:
\begin{itemize}
\item[(a)] $Soc(L) = A$ is a self-centralising abelian minimal ideal of $L$ which is complemented by $U$; that is, $L = U \dot{+} A$.
\item[(b)] $Soc(L) = A$ is a non-abelian minimal ideal of $L$ which is
supplemented by $U$; that is $L = U+A$. In this case $C_L(A) = 0$.
\item[(c)] $Soc(L) = A \oplus B$, where $A$ and $B$ are the two unique minimal ideals of $L$ and both are complemented by $U$; that is, $L = A \dot{+} U = B \dot{+} U$. In this case $A = C_L(B)$, $B = C_L(A)$, and $A$, $B$ and $(A+B) \cap U$ are non-abelian isomorphic algebras.
\end{itemize}
\end{itemize}
\end{theor}
\begin{proof} \begin{itemize} \item[1.]  If $L$ is primitive and $U$ is a core-free maximal subalgebra then it is clear that $L = U+A$ for every minimal ideal $A$ of $L$. Conversely, if there exists a subalgebra $M$ of $L$ such that $L = M+A$ for every minimal ideal $A$ of $L$ and $U$ is a maximal subalgebra of $L$
such that $M \subseteq U$, then $U$ cannot contain any minimal ideal of $L$,
and therefore $U$ is a core-free maximal subalgebra of $L$.
\item[2.] Since $U$ is core-free in $L$, we have that $L = U+A$. Since $A$ is an ideal of $L$,
then $C$ is an ideal of $L$ and then $C \cap U$ is an ideal of $U$. Since $[C \cap U,A]=0$, we have that $C \cap U$ is an ideal of $L$. Therefore $C \cap U = 0$.
\par

If $C \neq 0$, consider a minimal ideal $X$ of $L$ such that $X \subseteq C$.
Since $X \not \subseteq U$, then $L = X+U$. But now $C = C \cap (X+U) = X+(C \cap U)=X$.
\item[3.] Let us assume that $A_1, A_2, A_3$ are three pairwise distinct minimal ideals of $L$. Since $A_1 \cap A_2 = A_1 \cap A_3 = A_2 \cap A_3 = 0$, we have that
$A_2 \oplus A_3 \subseteq C_L(A_1)$. But then $C_L(A_1)$ is not a minimal ideal of $L$,
and this contradicts $2$. Hence, in a primitive Lie algebra there exist at most two
distinct minimal ideals.
\par

Suppose that $A$ is a non-trivial abelian ideal of $L$. Then $A \subseteq C_L(A)$. Since by $2$, $C_L(A)$ is a minimal ideal of $L$, we have that $A$ is self-centralising. Thus, in a primitive Lie algebra there exists at most one abelian minimal ideal of $L$. Moreover, $L = A+U$ and $A$ is
self-centralising. Then $A \cap U = C_L(A) \cap U = 0$.
\par

If there exists a unique minimal non-abelian ideal $A$ of $L$, then
$L = A+U$ and $C_L(A) = 0$.
\par

If there exist two minimal ideals $A$ and $B$, then $A \cap B = 0$
and then $B \subseteq C_L(A)$ and $A \subseteq C_L(B)$. Since $C_L(A)$ and $C_L(B)$ are minimal
ideals of $L$, we have that $B = C_L(A)$ and $A = C_L(B)$. Now $A \cap U =
C_L(B) \cap U = 0$ and $B \cap U = C_L(A) \cap U = 0$. Hence $L = A \dot{+} U = B \dot{+} U$.
Since $A = C_L(B)$, it follows that $B$ is non-abelian. Analogously we have
that $A$ is non-abelian. Furthermore, we have $A+((A+B) \cap U) = A+B = B+((A+B) \cap U)$. Hence 
\[
A \cong \frac{A}{A \cap B} \cong \frac{A+B}{B} \cong \frac{B+((A+B) \cap U)}{B} \cong (A+B) \cap U. 
\]
Analogously $B \cong (A+B) \cap U$.
\end{itemize} 
\end{proof}
\bigskip

As in the group-theoretic case this leads to three types. A primitive Lie algebra is said to be 
\begin{itemize}
\item[1.] {\em primitive of type $1$} if it has a unique minimal ideal that is abelian;
\item[2.] {\em primitive of type $2$} if it has a unique minimal ideal that is  non-abelian; and
\item[3.] {\em primitive of type $3$} if it has precisely two distinct minimal ideals each of which is  non-abelian.
\end{itemize}

Of course, primitive Lie algebras of types $2$ and $3$ are semisimple, and those of types $1$ and $2$ are monolithic. (A Lie algebra $L$ is called {\em monolithic} if it has a unique minimal ideal $W$, the {\em monolith} of $L$.)

\begin{ex} Examples of each type are easy to find.
\begin{itemize}
\item[1.] Clearly every primitive solvable Lie algebra is of type $1$.
\item[2.] Every simple Lie algebra is primitive of type $2$.
\item[3.] If $S$ is a simple Lie algebra then $L=S \oplus S$ is primitive of type $3$ with core-free maximal subalgebra $D = \{ s+s : s \in S \}$, the diagonal subalgebra of $L$.
\end{itemize}  
\end{ex}

Let $M$ be a maximal subalgebra of $L$. Then $M/M_L$ is a core-free maximal subalgebra of $L/M_L$. We say that $M$ is
\begin{itemize}
\item[1.] {\em a maximal subalgebra of type $1$} if $L/M_L$ is primitive of type $1$;
\item[2.] {\em a maximal subalgebra of type $2$} if  $L/M_L$ is primitive of type $2$; and
\item[3.] {\em a maximal subalgebra of type $3$} if  $L/M_L$ is primitive of type $3$.
\end{itemize}

\begin{lemma} Let $L$ be a non-trivial Lie algebra. 
\begin{itemize}
\item[(i)] If $M$ is a maximal subalgebra of $L$, then $L/M_L$ is primitive.
\item[(ii)] If $B$ is an ideal of $L$ and $L/B$ is primitive, then $L$ has a maximal subalgebra $M$ such that $B=M_L$.
\end{itemize}
\end{lemma}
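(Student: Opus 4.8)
The plan is to reduce both parts to the lattice correspondence theorem for a Lie algebra modulo an ideal, together with the definition of the core $M_L$ as the largest ideal of $L$ contained in $M$.

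For (i), the natural candidate for a core-free maximal subalgebra of $L/M_L$ is $M/M_L$. First I would note that, since $M_L$ is an ideal of $L$ contained in the maximal subalgebra $M$, the correspondence theorem makes $M/M_L$ a maximal subalgebra of $L/M_L$. It then remains only to check that this subalgebra is core-free. So suppose $K/M_L$ is an ideal of $L/M_L$ lying inside $M/M_L$, where $M_L \subseteq K \subseteq M$. Then $K$ is an ideal of $L$ contained in $M$, and the maximality of $M_L$ among such ideals (its very definition as the core) forces $K \subseteq M_L$, hence $K/M_L = 0$. Thus $M/M_L$ is a core-free maximal subalgebra, and so $L/M_L$ is primitive.

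For (ii), I would run the same correspondence in the opposite direction. Since $L/B$ is primitive it has a core-free maximal subalgebra, which has the form $U/B$ for a unique subalgebra $U$ with $B \subseteq U$; by the correspondence theorem $U$ is maximal in $L$. I claim that $M = U$ does the job, i.e.\ that $U_L = B$. One inclusion is immediate: $B$ is an ideal of $L$ contained in $U$, so $B \subseteq U_L$. For the reverse, $U_L$ is an ideal of $L$ with $B \subseteq U_L \subseteq U$, so $U_L/B$ is an ideal of $L/B$ contained in the core-free subalgebra $U/B$; hence $U_L/B = 0$ and $U_L = B$, as required.

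I do not expect a genuine obstacle here: both statements are essentially bookkeeping about how the core interacts with passage to a quotient. The only point needing care is the routine but crucial verification that the ideals of the quotient $L/N$ correspond exactly to the ideals of $L$ containing $N$ (and similarly for maximal subalgebras), so that the core of $M/M_L$ in $L/M_L$ is the image of $M_L$, namely zero. Once that correspondence is invoked, each direction is a one-line computation from the definition of the core.
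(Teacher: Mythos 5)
Your proof is correct and follows the same route the paper takes: part (i) is exactly the "easy" verification the paper omits (the core of $M/M_L$ in $L/M_L$ pulls back to an ideal of $L$ inside $M$, hence lies in $M_L$), and part (ii) is the paper's one-line argument with the check $U_L=B$ written out. No gaps.
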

\begin{proof}
\begin{itemize}
\item[(i)] This is easy.
\item[(ii)] Let $M/B$ be a core-free maximal subalgebra of $L/B$. Then $M$ is a maximal subalgebra of $L$ and $M_L=B$.
\end{itemize}
\end{proof}
\bigskip

We say that an ideal $A$ is {\em complemented} in $L$ if there is a subalgebra $U$ of $L$ such that $L=A+U$ and $A \cap U=0$. For primitive solvable Lie algebras we have the following analogue of Galois' Theorem for groups.

\begin{theor}\label{t:galois}
\begin{itemize}
\item[1.] If $L$ is a solvable primitive Lie algebra then all core-free maximal subalgebras are conjugate.
\item[2.] If $A$ is a self-centralising minimal ideal of a solvable Lie algebra $L$, then $L$ is primitive, $A$ is complemented in $L$, and all complements are conjugate.
\end{itemize}
\end{theor}
\begin{proof}
\begin{itemize}
\item[1.] This is \cite[Lemma 3]{conj}.
\item[2.] This follows easily from \cite[Lemma 1.5]{perm} and $1$.
\end{itemize}
\end{proof}
\bigskip

 The {\em Frattini ideal} of $L$, $\phi(L)$, is the  core of intersection of the maximal subalgebras of $L$. We say that $L$ is {\em $\phi$-free} if $\phi(L)=0$. Then we have the following characterisation of primitive Lie algebras of type $1$.

\begin{theor}\label{t:type1} Let $L$ be a Lie algebra over a field $F$.
\begin{itemize}
\item[1.] $L$ is primitive of type $1$ if and only if $L$ is monolithic, with abelian monolith $W$, and $\phi$-free.
\item[2.] If $F$ has characteristic zero, then $L$ is primitive of type $1$ if and only if $L=W \ltimes (C \oplus S)$, where $W$ is the abelian monolith of $L$, $C$ is an abelian subalgebra of $L$, every element of which acts semisimply on $W$, and $S$ is a Levi subalgebra of $L$.
\item[3.] If $L$ is solvable, then $L$ is primitive if and only if it has a self-centralising minimal ideal $A$. 
\end{itemize}
\end{theor}
\begin{proof}
\begin{itemize}
\item[1.] If $L$ is primitive of type $1$ then it has the stated properties, by Theorem \ref{t:class}. So suppose that $L$ is monolithic with abelian monolith $W$ and $\phi$-free. Then there is a maximal subalgebra $M$ of $L$ such that $L=W \dot{+} M$. If $M_L \neq 0$ there is a minimal ideal of $L$ contained in $M_L$ and distinct from $W$, a contradiction. Hence $M_L=0$ and $L$ is primitive of type $1$.
\item[2.] This follows from $1$ and \cite[Theorem 7.5]{frat}.
\item[3.] If $L$ is solvable then it has the stated property, by Theorem \ref{t:class}. The converse follows from \cite[Lemma 1.5]{perm}.
\end{itemize} 
\end{proof}

\begin{theor}\label{t:type1and3} For a Lie algebra $L$ the following are pairwise equivalent:
\begin{itemize}
\item[1.] $L$ is primitive of type $1$ or $3$;
\item[2.] there is a minimal ideal $B$ of $L$ complemented by a subalgebra $U$ which also complements $C_L(B)$;
\item[3.] there is a minimal ideal $B$ of $L$ such that $L$ is isomorphic to the semi-direct sum $X = B \ltimes L/C_L(B)$.
\end{itemize}
\end{theor}
\begin{proof}  \begin{itemize}\item[$1 \Rightarrow 2$] : This is clear from Theorem \ref{t:class}. 
\item[$2 \Rightarrow 1$] : Since $U_L \cap B = 0$ we have $U_L \subseteq C_L(B)$. But now $U_L \cap C_L(B) = 0$ implies that $U_L = 0$.  Suppose that $M$ is a proper subalgebra of $L$ containing $U$. Then $M \cap B$ is an ideal of of $M$ and is centralised by $C_L(B)$, so $M \cap B$ is an ideal of $M + C_L(B) = L$. By the minimality of $B$ we have that $M \cap B = 0$ and $U = M$. It follows that $U$ is a core-free maximal subalgebra of $L$ and $L$ is primitive. Finally note that the minimal ideal of a primitive Lie algebra of type $2$ has trivial centraliser. 
\item[$2 \Rightarrow 3$] : Simply note that $L=B \dot{+} U$ and $U \cong L/C_L(B)$, whence the map $\theta :  B \ltimes L/C_L(B) \rightarrow L$ defined by $\theta(b+u)=b+(u+C_L(B))$ is the required isomorphism.
\item[$3 \Rightarrow 2$] : Put $C = C_L(B)$ and assume there is an isomorphism $\theta : L \rightarrow B \ltimes L/C$. Consider the following subalgebras: $B^* = \theta( \{b+C:b \in B\})$, $U^*=\theta (\{(0+(x+C) : x \in L \})$ and $C^*= \theta (\{ b+(x+C) : b+x \in C \})$. For each $b \in B$, we have that $\theta(-b+(b+C))$ is a non-trivial element of $C^*$, and so $C^* \neq 0$. It is easy to check that $B^*$ is a minimal ideal of $L$, that $C^*=C_L(B^*)$ and that $U^*$ complements $B^*$ and $C^*$.
\end{itemize}
\end{proof}
\bigskip

As usual, ${\mathcal O}(m;\underline{1})$ denotes the truncated polynomial ring in $n$ indeterminates. Then the above yields the following characterisation of primitive Lie algebras of type $3$.

\begin{coro}\label{c:type3}
\begin{itemize}
\item[1.] $L$ is primitive of type $3$ if and only if $L$ has two distinct minimal ideals $B_1$ and $B_2$, both isomorphic to $S \otimes {\mathcal O}(m;\underline{1})$, where $S$ is simple, with a common complement and such that the factor algebras $L/B_i$ are primitive of type $2$ for $i=1,2$. In this case 
\[ L \subseteq \bigoplus_{i=1}^2 (Der\,S_i \otimes {\mathcal O}(m;\underline{1}))  \oplus (Id_{S_i} \otimes W(m;\underline{1})),
\]
where $S_1 \cong S_2 \cong S$.
\item[2.] If $F$ has characteristic zero, then $L$ is primitive of type $3$ if and only if $L = S \oplus S$, where $S$ is simple.
\end{itemize}
\end{coro}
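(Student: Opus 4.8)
The plan is to read off most of the data from Theorem~\ref{t:class} and then feed it into the structure theory of minimal non-abelian ideals. For the forward implication of part~1, assume $L$ is primitive of type $3$. By Theorem~\ref{t:class}(3c), $Soc(L)=B_1\oplus B_2$, where $B_1,B_2$ are the only two minimal ideals, both non-abelian, with $B_1=C_L(B_2)$, $B_2=C_L(B_1)$, a common complement $U$, and $B_1\cong B_2\cong (B_1+B_2)\cap U$. Each $B_i$ is a non-abelian minimal ideal, hence $L$-simple, perfect and centreless; by Block's theorem on differentiably simple algebras such an ideal has the form $S_i\otimes\mathcal{O}(m_i;\underline{1})$ with $S_i$ simple. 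The isomorphism $B_1\cong B_2$ then forces $S_1\cong S_2=:S$ and $m_1=m_2=:m$, giving the asserted common form.

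Next I would establish that $L/B_i$ is primitive of type $2$. Since $B_1=C_L(B_2)$, a short computation shows that the image $(B_1+B_2)/B_1\cong B_2$ is a non-abelian minimal ideal of $L/B_1$ with trivial centraliser; any further minimal ideal would centralise it and so vanish, whence $L/B_1$ is monolithic with non-abelian monolith. The one genuinely delicate point --- and the main obstacle --- is to see that this monolith is \emph{not} contained in the Frattini ideal $\phi(L/B_1)$, so that $L/B_1$ has a core-free maximal subalgebra and is therefore primitive of type $2$. This is precisely where the structure theory is needed: a perfect (non-abelian) minimal ideal cannot lie inside the Frattini ideal, so a maximal subalgebra missing the monolith exists. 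The same argument applies to $L/B_2$.

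For the reverse implication of part~1, suppose $L$ carries two distinct minimal ideals $B_1,B_2$ of the stated form with common complement $U$ and with each $L/B_i$ primitive of type $2$. Since $B_1\cap B_2=0$ we have $B_2\subseteq C_L(B_1)$; using that the monolith $(B_1+B_2)/B_2\cong B_1$ of $L/B_2$ has trivial centraliser, one computes $C_L(B_1)/B_2=0$, i.e. $C_L(B_1)=B_2$, and symmetrically $C_L(B_2)=B_1$. Thus $U$ complements both $B_1$ and $C_L(B_1)=B_2$, so condition~$2$ of Theorem~\ref{t:type1and3} holds and $L$ is primitive of type $1$ or $3$; as $B_1$ is non-abelian it cannot be the abelian monolith of a type-$1$ algebra, so $L$ is of type $3$.

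Finally, the embedding and part~2. Because $C_L(B_1)\cap C_L(B_2)=B_2\cap B_1=0$, the adjoint action gives a faithful map $L\hookrightarrow Der(B_1)\oplus Der(B_2)$; inserting the known derivation formula $Der(S\otimes\mathcal{O}(m;\underline{1}))=(Der\,S\otimes\mathcal{O}(m;\underline{1}))\oplus(Id_S\otimes W(m;\underline{1}))$ yields the displayed inclusion. In characteristic zero $m=0$, so each $B_i$ is simple and every derivation of $B_i$ is inner; hence $Der(B_i)=ad(B_i)$ and already $ad(B_1\oplus B_2)$ fills $Der(B_1)\oplus Der(B_2)$. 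Since $ad_L$ is injective this forces $L=B_1\oplus B_2\cong S\oplus S$, and the converse is the type-$3$ example $S\oplus S$ recorded after Theorem~\ref{t:class}. The crux throughout is the two structural inputs --- Block's classification of the minimal non-abelian ideals as $S\otimes\mathcal{O}(m;\underline{1})$ and the computation of their derivation algebras --- together with the verification that the non-abelian monolith of each $L/B_i$ escapes the Frattini ideal.
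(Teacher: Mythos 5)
Your proof is correct and follows essentially the same route as the paper's: Theorem \ref{t:class}(3c) for the two minimal ideals, the common complement and the mutual centraliser relations; the computation $C_{L/B_1}((B_1+B_2)/B_1)=0$ for the forward direction; Theorem \ref{t:type1and3} for the converse; and Block's theorem for the form of the $B_i$. The only difference is that you explicitly justify two points the paper leaves implicit --- that the perfect monolith of $L/B_i$ cannot lie in the (nilpotent) Frattini ideal, so that trivial centraliser really does force $L/B_i$ to be primitive of type $2$, and the displayed embedding via $Der(S\otimes{\mathcal O}(m;\underline{1}))$ --- and both of these justifications are sound.
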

\begin{proof} \begin{itemize} \item[1.] Suppose first that $L$ is primitive of type $3$. Then $L$ has two distinct minimal ideals $B_1$ and $B_2$ which have a common complement $U$ in $L$, by Theorem \ref{t:class}. Also, $U \cong L/B_1$ and $(B_2+B_1)/B_1$ is a minimal ideal of $L/B_1$. If $x+B_1 \in C_{L/B_1}((B_2+B_1)/B_1)$ then $[b,x] \in B_1$ for all $b \in B_2$, which yields that $[x,b] \in B_1 \cap B_2 = 0$ and so $x \in C_L(B_2) = B_1$. Hence $C_{L/B_1}((B_2+B_1)/B_1)=0$, implying that $L/B_1$ is primitive of type $2$, and therefore so are $U$ and $L/B_2$. Finally $B_1$ and $B_2$ are both isomorphic to $S \otimes {\mathcal O}(m;\underline{1})$, where $S$ is simple, by \cite{block}.
\par

Conversely, suppose that $L$ has two distinct minimal ideals $B_1$ and $B_2$ with a common complement $U$ and such that the factor algebras $L/B_i$ are primitive of type $2$ for $i=1,2$. Then $U \cong L/B_i$ is primitive of type $2$ such that $Soc(L/B_i) = (B_1+B_2)/B_i$ and $C_L((B_1+B_2)/B_i) = B_i$. It follows that $C_L(B_2)=B_1$ and $C_L(B_1)=B_2$. By Theorem \ref{t:type1and3} this implies that $L$ is primitive of type 3. 
\item[2.] If $L$ is primitive of type $3$ it must be semisimple with precisely two ideals which are isomorphic to each other, and thus is as described.
\end{itemize}
\end{proof}
\bigskip

This leaves primitive Lie algebras of type $2$, which can be characterised as follows.

\begin{theor}\label{t:type2}
\begin{itemize}
\item[1.] $L$ is primitive of type $2$ if and only if 
\[ L \cong U + (S \otimes {\mathcal O}(m;\underline{1})) \subseteq (Der\,S \otimes {\mathcal O}(m;\underline{1}))  \oplus (Id_S \otimes W(m;\underline{1})),
\] 
where $S \otimes {\mathcal O}(m;\underline{1})$ is an ideal of $L$ and $S$ is simple. 
\item[2.] If $F$ has characteristic zero, then $L$ is primitive of type $2$ if and only if $L$ is simple.
\item[3.] $L$ is primitive of type $2$ if and only if there is a primitive Lie algebra $X$ of type $3$ such that $L \cong X/B$ for a minimal ideal $B$ of $L$. 
\end{itemize}
\end{theor}
\begin{proof} \begin{itemize} 
\item[1] and $2$ are clear from Theorem \ref{t:class}.
\item[3.] Suppose that $L$ is primitive of type $2$, and let $D$ be the unique minimal ideal of $L$. Then $D$ is non-abelian and $C_L(D) =0$, so the semi-direct sum $X=D \ltimes L$ is primitive of type $3$, by Theorem \ref{t:type1and3}. Clearly, if $B = \{ b+0 : b \in D \}$, then $X/B \cong L$. The converse follows easily from Corollary \ref{c:type3} and Theorem \ref{t:class}.
\end{itemize}
\end{proof}
\bigskip

A special case of the above occurs as follows. We will call a Lie algebra $L$ {\em almost simple} if it is a subalgebra of Der\,$S$ for some simple subalgebra $S$ of $L$. Over a field of characteristic zero such an algebra has to be simple, as $L$ would be sandwiched between Inn\,$S (\cong S)$ and Der\,$S$, and simple algebras over such fields have no outer derivations. However, that is not the case for a field of characteristic $p$, even if it is algebraically closed and $S$ is restricted (see \cite{s-f}). It is straightforward to check that $C_{\hbox{Der}\,(S)}(S) = 0$ and thus that an almost simple Lie algebra is primitive of type $2$.

\section{Chief factors}
We say that two chief factors are {\em $L$-isomorphic}, denoted by `$\cong_L$', if they are isomorphic as $L$-modules. The {\em centraliser} of a chief factor $A/B$ is $C_L(A/B)= \{ x \in L : [x,A] \subseteq B \}$.

\begin{theor}\label{t:L-isom} Let $L$ be a Lie algebra and let $A_1/B_1$, $A_2/B_2$ be two chief factors of $L$. Then
\begin{itemize}
\item[(i)] if $A_1/B_1$ and $A_2/B_2$ are $L$-isomorphic, $C_L(A_1/B_1) = C_L(A_2/B_2)$;
\item[(ii)] if $A_1/B_1$ and $A_2/B_2$ are non-abelian, they are $L$-isomorphic if and only if $C_L(A_1/B_1) = C_L(A_2/B_2)$. 
\end{itemize}
\end{theor}
\begin{proof} (i) is clear, so suppose that $C = C_L(A_1/B_1) = C_L(A_2/B_2)$. Then $B_i \subseteq C \cap A_i$ for $i=1,2$. Since $A_i/B_i$ is non-abelian we have $A_i \not \subseteq C$, which yields that $B_i = C \cap A_i$  and $A_i/B_i$ is $L$-isomorphic to $(A_i+C)/C$ for $i=1,2$. Now $(A_1+C)/C$ is a minimal ideal of $L/C$ and $C_{L/C}((A_1+C)/C) = C$,  so $L/C$ is primitive of type $2$. But $(A_2+C)/C$ is also a minimal ideal of $L/C$, so we have that $A_1+C=A_2+C$. It follows that  $A_1/B_1$ and $A_2/B_2$ are $L$-isomorphic.
\end{proof}
\bigskip

It is easy see that \ref{t:L-isom}(ii) does not hold for abelian chief factors, as the following example shows. 
\begin{ex} Let $L=\R a+ \R b + \R c + \R x$ with mutiplication $[x,a]=a$, $[x,b]=c$, $[x,c]=-b$. Then $A_1= \R a$ and $A_2= \R b + \R c$ are minimal ideal of $L$ and $C_L(A_1) = \R a + \R b + \R c = C_L(A_2)$, but $A_1$ and $A_2$ are clearly not $L$-isomorphic.
\end{ex}

In \cite{prefrat} a strengthened form of the Jordan-H\"older Theorem in which Frattini chief factors correspond was established for solvable Lie algebras. However, the assumption of solvability is not needed as we shall show below. This assumption is used only to establish \cite[Lemma 2.1]{prefrat}, so we simply need a slightly modified version of that Lemma.

\begin{lemma}\label{l:corr} Let $A_1$, $A_2$ be distinct minimal ideals of the Lie algebra $L$. Then there is a bijection 
\[ \theta: \{ A_1, (A_1 + A_2)/A_1 \} \rightarrow \{ A_2, (A_1 + A_2)/A_2 \}
\]
such that corresponding chief factors are isomorphic as $L$-modules and Frattini chief factors correspond to one another.
\end{lemma}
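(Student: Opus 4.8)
The plan is to use the bijection coming from the isomorphism theorems and then verify that it respects the Frattini property. Since $A_1$ and $A_2$ are distinct minimal ideals we have $A_1 \cap A_2 = 0$, so $A_1 + A_2 = A_1 \oplus A_2$ and the maps $a_1 + A_2 \mapsto a_1$ and $a_2 + A_1 \mapsto a_2$ give $L$-module isomorphisms $(A_1 + A_2)/A_2 \cong_L A_1$ and $(A_1 + A_2)/A_1 \cong_L A_2$. Accordingly I would take $\theta$ to be the crossed bijection $\theta(A_1) = (A_1 + A_2)/A_2$, $\theta((A_1 + A_2)/A_1) = A_2$, so that corresponding factors are $L$-isomorphic by construction. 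It then remains to show that Frattini factors correspond, and since the whole configuration is symmetric in $A_1$ and $A_2$ this reduces to the single equivalence that $A_1$ is a Frattini chief factor (that is, $A_1 \subseteq \phi(L)$) if and only if $(A_1 + A_2)/A_2 \subseteq \phi(L/A_2)$.

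For this I would use the characterisation that a chief factor $X/Y$ is Frattini precisely when every maximal subalgebra of $L$ containing $Y$ also contains $X$ (equivalently $X/Y \subseteq \phi(L/Y)$, which for the ideal $X/Y$ of $L/Y$ amounts to lying in every maximal subalgebra of $L/Y$). The forward direction is immediate: if $A_1 \subseteq \phi(L)$ then any maximal subalgebra containing $A_2$ already contains $A_1$, hence $A_1 + A_2$. For the converse I would argue contrapositively, manufacturing from a maximal subalgebra $M$ with $A_1 \not\subseteq M$ a maximal subalgebra $N$ with $A_2 \subseteq N$ but $A_1 \not\subseteq N$. If $A_2 \subseteq M$ then $N = M$ works, so assume $A_2 \not\subseteq M$; then $L = M + A_1 = M + A_2$, and writing $C = M_L$ for the core, each $A_i$ (being a minimal ideal not contained in $M$) satisfies $A_i \cap C = 0$. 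Thus $L/C$ is primitive and the images $\bar A_1, \bar A_2$ are non-zero minimal ideals of $L/C$.

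The decisive split is whether $\bar A_1$ and $\bar A_2$ are distinct. If they are, then $L/C$ has two distinct minimal ideals and so is primitive of type $3$ by Theorem \ref{t:class}; by the argument of Corollary \ref{c:type3} the quotient $(L/C)/\bar A_2$ is primitive of type $2$, and pulling a core-free maximal subalgebra of it back to $L$ produces a maximal subalgebra $N$ containing $A_2$ with $A_1 \not\subseteq N$, as required. The main obstacle is the remaining case $\bar A_1 = \bar A_2$: here passing to $L/C$ merges $A_1$ and $A_2$ (indeed this forces $A_1 \cong_L A_2$, since both are $L$-isomorphic to the common image), so no quotient by the core can separate them and the type-$3$ device is unavailable. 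In this case I expect to have to work inside the ideal $V = A_1 \oplus A_2$ directly, analysing $K = M \cap V$ as a submodule of the semisimple $L$-module $V$ and exploiting the existence of a complement to $(A_1+A_2)/A_2$ (as furnished by the Frattini theory of \cite{frat} and \cite{prefrat}) to construct the separating maximal subalgebra $N$. Once this case is settled the equivalence follows, and with it the correspondence of Frattini factors under $\theta$.
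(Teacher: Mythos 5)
There is a genuine gap here, and it is not merely the unfinished case at the end: the reduction you make at the outset is to a statement that is false. You fix the crossed bijection $\theta(A_1)=(A_1+A_2)/A_2$, $\theta((A_1+A_2)/A_1)=A_2$ once and for all, and then try to prove that $A_1$ is a Frattini chief factor if and only if $(A_1+A_2)/A_2$ is. This equivalence fails. Take $L=\langle x,y,c,d\rangle$ with $[x,y]=c$ and all other products zero (the Heisenberg algebra with an extra central basis vector), and put $A_1=Fd$, $A_2=F(c+d)$. Since $L$ is nilpotent, $\phi(L)=L^2=Fc$, so $A_1$ and $A_2$ are non-Frattini minimal ideals; but $(A_1+A_2)/A_2=(L/A_2)^2=\phi(L/A_2)$ and likewise $(A_1+A_2)/A_1=\phi(L/A_1)$, so both quotient factors are Frattini. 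Here the only bijection respecting the Frattini property is the straight one, $A_1\mapsto A_2$ and $(A_1+A_2)/A_1\mapsto(A_1+A_2)/A_2$ (still compatible with $L$-isomorphism, since all four factors are trivial one-dimensional $L$-modules). This example lands exactly in the residual case you could not close: the maximal subalgebra $M=\langle x,y,c\rangle$ omitting $A_1$ is an ideal, so $A_1+M_L=A_2+M_L=L$, and the separating maximal subalgebra $N\supseteq A_2$ with $A_1\not\subseteq N$ that you hope to construct cannot exist, precisely because $(A_1+A_2)/A_2$ is Frattini. So the obstacle you identified is not a technical difficulty but a genuine obstruction to your strategy.

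The paper's proof avoids this by letting $\theta$ depend on the configuration. When $A_1\subseteq\phi(L)$, or when none of the factors is Frattini, the crossed bijection works exactly as in your forward direction. The one remaining configuration is: $A_1$ and $A_2$ non-Frattini but $(A_1+A_2)/A_2$ Frattini. There, instead of seeking a separating maximal subalgebra, one proves that $(A_1+A_2)/A_1$ is then also Frattini --- by choosing a maximal subalgebra $M$ with $A_1\not\subseteq M$, deducing $A_2\not\subseteq M$ and $M\cap A_1=M\cap A_2=0$, and transporting the condition ``every maximal subalgebra containing $A_2$ contains $A_1+A_2$'' through the isomorphisms $L/A_i\cong M$ to conclude that every maximal subalgebra of $M$ contains $(A_1+A_2)\cap M$ --- and then switches to the straight bijection. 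Your treatment of the subcase $A_1+M_L\neq A_2+M_L$ via primitivity of type $3$ is correct as far as it goes, but the correct target in the degenerate case is the propagation of the Frattini property to $(A_1+A_2)/A_1$, not the construction of $N$.
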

\begin{proof}  Put $A = A_1 \oplus A_2$. Suppose first that $A_1$ is a Frattini chief factor. Then $A_1 \subseteq \phi(L)$. Thus $A/A_2 \subseteq \phi(L/A_2)$ and $A/A_2$ is a Frattini chief factor. If $A/A_1$ is also a Frattini chief factor, then $A/A_1 \subseteq \phi(L/A_1)$, which yields that $A \subseteq \phi(L)$, and all four factors are Frattini. In this case we can choose $\theta$ so that $\theta(A_1) = A/A_2$ and $\theta(A/A_1) = A_2$. If $A/A_1$ is not a Frattini chief factor, then nor is $A_2$, by the same argument as above, and so the same choice of $\theta$ suffices; likewise if none of the factors are Frattini chief factors.
\par

The remaining case is where $A_1$ and $A_2$ are not Frattini chief factors but $A/A_2$ is. 
The fact that $A/A_2$ is a Frattini chief factor means that every maximal subalgebra containing $A_2$ also contains $A$, and so contains $A_1$. But, since $A_1$ is not a Frattini chief factor, there is a maximal subalgebra $M$ not containing $A_1$. It follows that $A_2 \not \subseteq M$. Thus $L=M+A_1=M+A_2$. Also, $[L,M \cap A_1] = [M+A_2,M \cap A_1] \subseteq M \cap A_1$. It follows that $M \cap A_1 = 0$. Similarly $M \cap A_2 = 0$.  Put $C = A \cap M$. Then $L/A_i \cong M$ and this isomorphism maps the set of of maximal subalgebras of $L/A_i$ onto the set of maximal subalgebras of $M$. Since $A/A_2$ is a Frattini chief factor, every maximal subalgebra of $L$ containing $A_2$ contains $A$, so every maximal subalgebra of $M$ contains $C$. It follows that every maximal subalgebra of $L$ which contains $A_1$ also contains $C+A_1$; that is, $A/A_1$ is a Frattini chief factor of $L$. So we can choose $\theta$ so that $\theta(A_1) = A_2$ and $\theta(A/A_1) = A/A_2$.  
\end{proof}
\bigskip

Then the following result follows exactly as does \cite[Theorem 2.2]{prefrat}.

\begin{theor}\label{t:jordan} Let
\[ 0 < A_1 < \ldots < A_n = L  \hspace{1in} (1)
\]
\[ 0 < B_1 < \ldots < B_n = L \hspace{1in} (2)
\]
be chief series for the Lie algebra $L$. Then there is a bijection between the chief factors of these two series such that corresponding factors are isomorphic as $L$-modules and such that the Frattini chief factors in the two series correspond. 
\end{theor}
\bigskip

Note that if $L$ is a primitive Lie algebra of type $3$, its two minimal ideals are not $L$-isomorphic, so we introduce the following concept. We say that two chief factors of $L$ are {\em $L$-connected} if either they are $L$-isomorphic, or there exists an epimorphic image $\overline{L}$ of $L$ which is primitive of type $3$ and whose minimal ideals are $L$-isomorphic, respectively, to the given factors. (It is clear that, if two chief factors of $L$ are $L$-connected and are not $L$-isomorphic, then they are nonabelian and there is a single epimorphic image of $L$ which is primitive of type $3$ and which connects them.)

\begin{theor}\label{t:Lcon} The relation `is $L$-connected to' is an equivalence relation on the set of chief factors.
\end{theor}
\begin{proof} The relation is clearly reflexive and symmetric, so we simply have to establish transitivity. Suppose that $A_i/B_i$ are chief factors of $L$ for $i=1,2,3$, for which $A_1/B_1$ is $L$-connected to $A_2/B_2$ and $A_2/B_2$ is $L$-connected to $A_3/B_3$. If any two are $L$-isomorphic the result is clear. So suppose that there are primitive epimorphic images images $L/K$ (with minimal ideals $H/K$ and $J/K$) and $L/T$ (with minimal ideals $P/T$ and $Q/T$) such that
\[ A_1/B_1 \cong_L H/K, \hspace{.5cm} J/K \cong_L A_2/B_2 \cong_L P/T, \hspace{.5cm} A_3/B_3 \cong_L Q/T.
\] 
Then $H=C_L(J/K)=C_L(P/T)=Q$. Now $L/H \cong (L/K)/(H/K)$, which is primitive of type $2$ by Corollary \ref{c:type3}. But
\[ \frac{P+H}{H} \cong \frac{P}{P \cap H} = \frac{P}{P \cap Q} = \frac{P}{T} \cong \frac{A_2}{B_2} \cong \frac{J}{K} = \frac{J}{H \cap J} \cong \frac{J+H}{H},
\]
so $(P+H)/H$ and $(J+H)/H$ are minimal ideals of $L/H$. It follows that $P+H=J+H$. Similarly, $L/J$ is primitive of type $2$ with unique minimal ideal $(H+J)/J$. As $(P+J)/J$ is an ideal of $L/J$ we have $H+J \subseteq P+J \subseteq P+H+J =H+J$, so $P+H=J+H=P+J$.
\par

If $P=J$ then $T=P \cap Q=J \cap H=K$ and $A_1/B_1 \cong_L A_3/B_3$, a contradiction. It follows that $P \neq J$. Let $M,N$ be maximal subalgebras of $L$ such that $K \subseteq M$, $T \subseteq N$, $M$ is a common complement of $H/K$ and $J/K$, and $N$ is a common complement of $P/T$ and $Q/T$, so $M_L=K$ and $N_L=T$. Put $W=P \cap J$ and $X=M \cap N+W$. Then $L=M+H=M+K=N+P=N+Q$ from which it follows that $M \cap W$ is an ideal of $L$ and $M \cap W=K \cap T$. Now $X=L$ implies that $M=M \cap X =M \cap (M \cap N+W)=M \cap N + M \cap W= M \cap N + K \cap T=M \cap N$, a contradiction, as $M \neq N$. Thus $X \neq L$ and
\begin{align} P+X & =P+M \cap N+W=P+M\cap N=P+T+M \cap N  \nonumber \\
 & = P+(T+M) \cap N = P+N=L.   \nonumber
\end{align} 
Similarly, $J+X=L$.
\par

But $P \cap X$ is an ideal of $X$ and $(P \cap X)/W \subseteq C_{L/W}(J)$, so $P \cap X$ is an ideal of $J+X=L$. Since $P/W \cong (P+J)/J = (H+J)/J$ is a chief factor of $L$ and $W \subseteq P \cap X \subseteq P$, we have $P \cap X=P$ or $P \cap X=W$. The former implies that $P \subseteq X$, which in turn yields that $X=L$, a contradiction. Thus we have $P \cap X = W$. Similarly $J \cap X=W$. Finally, $P/W$ is a minimal ideal of $L/W$ and $C_{L/W}(P/W)=J/W$, and so $L/W$ is primitive of type $3$ and $L$-connects $A_1/B_1$ and $A_3/B_3$.
\end{proof}
\bigskip

Let $A/B$ be a chief factor of $L$.  We say that $A/B$ is a {\em Frattini} chief factor if $A/B \subseteq \phi(L/B)$. If there is a subalgebra $M$ such that $L=A+M$ and $B \subseteq A \cap M$, we say that $A/B$ is a {\em supplemented} chief factor of $L$, and that $M$ is a {\em supplement} of $A/B$ in $L$. If $A/B$ is a non-Frattini chief factor of $L$ then $A/B$ is supplemented by a maximal subalgebra $M$ of $L$.
\par

If $A/B$ is a chief factor of $L$ supplemented by a subalgebra $M$ of $L$ and $A \cap M=B$ then we say that $A/B$ is a {\em complemented} chief factor of $L$, and $M$ is a {\em complement} of $A/B$ in $L$. When $L$ is solvable it is easy to see that a chief factor is Frattini if and only if it is not complemented. 

\begin{propo}\label{p:mono} Let $A/B$ be a chief factor of the Lie algebra $L$ supplemented by the maximal subalgebra $M$, and let $C=C_L(A/B)$. Then
\begin{itemize}
\item[(i)] $(A+M_L)/M_L$ is a minimal ideal of the primitive Lie algebra $L/M_L$;
\item[(ii)] if $M$ is of type $1$ or $3$ then each chief factor of $L$ supplemented by $M$ is complemented by $M$;
\item[(iii)] if $A/B$ is abelian then $L/M_L$ is of type $1$ and is isomorphic to the semidirect sum $A/B \rtimes L/C$; and
\item[(iv)] if $A/B$ is non-abelian then $L/C$ is primitive of type $2$, Soc$(L/C) \cong_L A/B$, and, if $K$ is a maximal subalgebra supplementing $(A+C)/C$ in $L$, then $K$ is also a supplement to $A/B$ in $L$ and $K_L =C$.
\end{itemize}
\end{propo}
\begin{proof} \begin{itemize} \item[(i)] It is clear that $L/M_L$ is primitive and $B=A \cap M_L$. Suppose that $S$ is an ideal of $L$ with $M_L \subseteq S \subseteq A+M_L$. Then $S=(A+M_L) \cap S=A \cap S+M_L$ and $B \subseteq A \cap S \subseteq A$. Then either $A \cap S=B$, in which case $S=M_L$, or $A \cap S=A$, in which case $S=A+M_L$. Hence $(A+M_L)/M_L$ is a minimal ideal of $L/M_L$.
\item[(ii)] If $M$ is of type $1$ or $3$ then $(A+M_L)/M_L$ is a minimal ideal of $L/M_L$, which is primitive of type $1$ or $3$ and so $M \cap (A+M_L)=M_L$. But then $M \cap A=M_L \cap A=B$.
\item[(iii)] If $A/B$ is abelian then $L/M_L$ is primitive of type $1$, in which case $C=A+M_L$ and $M/M_L \cong L/C$. It follows that $L/M_L$  is isomorphic to the semidirect sum $A/B \rtimes L/C$.
\item[(iv)] If $A/B$ is non-abelian then two possibilities arise. If $C=M_L$ then $L/M_L$ is primitive of type $2$ and Soc$(L/C)=(A+C)/C \cong_L A/B$. If $M_L \subset C$, then $L/M_L$ is primitive of type $3$ with minimal ideals $(A+M_L)/M_L$ and $C/M_L$. In this case $L/C$ is primitive of type $2$ and Soc$(L/C) =(A+C)/C \cong_L A/B$.
\par

In either case, let $K$ be a maximal subalgebra supplementing $(A+C)/C$ in $L$. Then $L=A+K$ and $B=A \cap B=A \cap K_L$. Hence $K$ is also a supplement of $A/B$ in $L$ and $K_L=C$.
\end{itemize}
\end{proof}
\bigskip

In view of the above result, for any chief factor $A/B$ of $L$ we define the primitive algebra associated with $A/B$ in $L$ to be 
\begin{itemize}
\item[(i)] the semidirect sum $A/B \rtimes (L/C_L(A/B))$ if $A/B$ is abelian, or
\item[(ii)] the factor algebra $L/C_L(A/B)$ if $A/B$ is non-abelian.
\end{itemize}
Let $A/B$ be a supplemented chief factor of $L$ for which $M$ is a maximal subalgebra of $L$ supplementing $A/B$ in $L$ such that $L/M_L$ is monolithic and primitive. Note that Proposition \ref{p:mono} (iii) and (iv) show that such an $M$ exists; we call $M$ a {\em monolithic maximal subalgebra} supplementing $A/B$ in $L$. We say that the chief factor Soc$(L/M_L) = (A+M_L)/M_L$ is the {\em precrown} of $L$ associated with $M$ and $A/B$, or simply, a precrown of $L$ associated with $A/B$.
\par

If $A/B$ is a non-abelian chief factor of $L$, then for each maximal subalgebra $M$ of $L$ supplementing $A/B$ in $L$ such
that $L/M_L$ is a monolithic and primitive, we have that $M_L = C_L(A/B)$. Therefore the unique precrown of $L$ associated with $A/B$ is
$$ \hbox{Soc}(L/M_L) = \frac{A+M_L}{M_L} = \frac{A+ C_L(A/B)}{C_L(A/B)}. $$
However, if $A/B$ is a complemented abelian chief factor of $L$ and $M$ is a complement of $A/B$ in $L$, then the precrown of $L$ associated with $M$ and $A/B$ is
$$ \hbox{Soc}(L/M_L) = \frac{A+M_L}{M_L} = C_{L/M_L} \left( \frac{A+M_L}{M_L} \right) = \frac{C_L(A/B)}{M_L}.$$
This raises the question of how many different precrowns are associated with a particular abelian chief factor. For solvable algebras the answer is given by the following result.

\begin{propo}\label{p:conj} Let $A/B$ be a complemented chief factor of a solvable Lie algebra $L$ over a field $F$ of characteristic $p$, and suppose further that $L^2$ has nilpotency class less than $p$. Then the map which assigns to each conjugacy class of complements of $A/B$ in $L$, \{exp(ad\,$a)(M) : a \in L \}$ say, the common core $M_L$ of its elements, induces a bijection between the set of all conjugacy classes of complements of $A/B$ in $L$ and the set of all ideals of $L$ which complement $A/B$ in $C_L(A/B)$. 
\par

Hence there is a bijection between the set of precrowns of $L$ associated with $A/B$ and the set of all conjugacy classes of complements of $A/B$ in $L$.
\end{propo}
\begin{proof} Put $C = C_L(A/B)$, and let $M$ be a maximal subalgebra of $L$ such that $L=A+M$ and $A \cap M=B$. Put $N=C \cap M$. Then $N$ is an ideal of $L$ such that $C = A+N$ and $A \cap N = B$. Then $(A+N)/N \cong_L A/B$ and $(A+N)/N$ is a self-centralising minimal ideal of $L/N$. By Theorem \ref{t:galois}, $(A+N)/N$ is complemented in $L/N$ and all complements are conjugate. If $M/N$ is one of these complements, then $N = M_L$. Hence the map is surjective.
\par

Let $M$ and $S$ be two complements of $A/B$ in $L$ such that $N = M_L = S_L$. Then $L/N$ is solvable and primitive such that and $S/N$, $M/N$ are complements of Soc$(L/N) = (A+N)/N$. By \cite[Theorem 5]{conj}, there exists an
element $a \in A$ such that exp(ad\,$a)(S) = M$. Hence the correspondence is injective.
\par

The maximal subalgebras in a conjugacy class have a common core, by \cite[Theorem 4]{conj}. Finally observe that, since $A/B$ is abelian, the precrowns of $L$ associated with $A/B$ have a common numerator $C_L(A/B)$ and different denominators $M_L$, one for each conjugacy class of complements of $A/B$ in $L$.
\end{proof}

\begin{propo}\label{p:precrown} Let $A_i/B_i$, $i=1,2$, be two supplemented chief factors of $L$ that are $L$-connected, and let $C_i/R_i$ be a precrown associated with $A_i/B_i$ for $i=1,2$. Then $C_1=C_2$. 
\end{propo}
\begin{proof} If $A_i/B_i$ is abelian, then $C_1=C_L(A_1/B_1)=C_L(A_2/B_2)=C_2$.
\par

If $A_1/B_1$ and $A_2/B_2$ are nonabelian but $L$-isomorphic, they have the same precrown, by Proposition \ref{p:mono} (iv).
\par

So suppose that there is an ideal $N$ of $L$ such that $L/N$ is primitive of type $3$ with minimal ideals $E_1/N$, $E_2/N$ such that $E_1/N \cong_L A_1/B_1$ and $E_2/N \cong_L A_2/B_2$. Then $C_L(E_1/N)=E_2$ and $C_L(E_2/N)=E_1$. Hence the precrown associated with $E_1/N$ and $A_1/B_1$ is $(E_1+E_2)/E_2$, and the precrown associated with $E_2/N$ and $A_2/B_2$ is $(E_1+E_2)/E_1$.
\end{proof}
\bigskip

\section{Crowns}
Let $A/B$ be a supplemented chief factor of $L$ and put ${\mathcal J} = \{ M_L : M$  is a monolithic maximal subalgebra of  $L$ which supplements a chief factor of  $L$ which is $L$-connected to $A/B \}$. Let $R= \cap\{N : N \in {\mathcal J} \}$ and $C=A+C_L(A/B)$. Then we call $C/R$ the {\em crown} of $L$ associated with $A/B$.

\begin{lemma}\label{l:crown} Let ${\mathcal J_1} = \{N : D/N$ is a precrown associated with a chief factor $L$-connected to $A/B \}$, ${\mathcal J_2} = \{M_L : M$ is a maximal subalgebra of $L$ supplementing a chief factor $L$-connected to $A/B \}$, and ${\mathcal J_3} = \{M_L : M$ is a maximal subalgebra of $L$ supplementing a chief factor $L$-isomorphic to $A/B \}$. Then 
\[ \bigcap \{N: N \in {\mathcal J} \} = \bigcap \{N: N \in {\mathcal J_1} \} = \bigcap \{N: N \in {\mathcal J_2} \} = \bigcap \{N: N \in {\mathcal J_3} \}.
\] 
\end{lemma}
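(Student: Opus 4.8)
The plan is to show that the four collections $\mathcal{J}, \mathcal{J}_1, \mathcal{J}_2, \mathcal{J}_3$ have a common intersection by a short cycle of inclusions, most of which come for free from inclusions between the index sets themselves. First I would observe that $\mathcal{J} = \mathcal{J}_1$: by definition a precrown associated with a chief factor $L$-connected to $A/B$ is $\mathrm{Soc}(L/M_L) = (D+M_L)/M_L$ for a monolithic maximal subalgebra $M$ supplementing that factor, so its denominator is precisely $M_L$; thus the ideals occurring as denominators in $\mathcal{J}_1$ are exactly the cores occurring in $\mathcal{J}$. Next, every monolithic maximal subalgebra is a maximal subalgebra, so $\mathcal{J} \subseteq \mathcal{J}_2$; and an $L$-isomorphic chief factor is $L$-connected, so $\mathcal{J}_3 \subseteq \mathcal{J}_2$. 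Since intersecting over a larger family yields a smaller ideal, these give the free inclusions $\bigcap \mathcal{J}_2 \subseteq \bigcap \mathcal{J}$ and $\bigcap \mathcal{J}_2 \subseteq \bigcap \mathcal{J}_3$. It therefore suffices to establish $\bigcap \mathcal{J} \subseteq \bigcap \mathcal{J}_2$ and $\bigcap \mathcal{J}_3 \subseteq \bigcap \mathcal{J}$; together with the free inclusions and $\mathcal{J}=\mathcal{J}_1$ these close the cycle and force equality throughout.

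For $\bigcap \mathcal{J} \subseteq \bigcap \mathcal{J}_2$ I would fix $N = M_L \in \mathcal{J}_2$, with $M$ a maximal subalgebra supplementing a chief factor $X/Y$ that is $L$-connected to $A/B$, and show $\bigcap \mathcal{J} \subseteq N$. If $M$ is monolithic there is nothing to prove, since then $N \in \mathcal{J}$. Otherwise $L/M_L$ is primitive but not monolithic, hence of type $3$, with two minimal ideals $E_1/M_L$ and $E_2/M_L$ meeting in $M_L = E_1 \cap E_2$; by Proposition \ref{p:mono}(i) the factor $(X+M_L)/M_L$ is a minimal ideal of $L/M_L$ and is $L$-isomorphic to $X/Y$, so it is one of them, say $E_1/M_L$. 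By Corollary \ref{c:type3} each $L/E_i$ is primitive of type $2$, and a primitive quotient is always of the form $L/M_L$; this produces a monolithic maximal subalgebra with core $E_i$, supplementing the minimal ideal $(E_1+E_2)/E_i \cong_L E_j/M_L$ (with $j \neq i$). As $E_1/M_L$ and $E_2/M_L$ are the two minimal ideals of the type-$3$ algebra $L/M_L$ they are $L$-connected to one another, and $E_1/M_L$ is $L$-connected to $A/B$; by transitivity (Theorem \ref{t:Lcon}) both factors $(E_1+E_2)/E_i$ are $L$-connected to $A/B$. Hence $E_1, E_2 \in \mathcal{J}$ and $\bigcap \mathcal{J} \subseteq E_1 \cap E_2 = M_L = N$.

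For $\bigcap \mathcal{J}_3 \subseteq \bigcap \mathcal{J}$ I would fix $N = M_L \in \mathcal{J}$, with $M$ a monolithic maximal subalgebra supplementing a chief factor $X/Y$ that is $L$-connected to $A/B$. If $X/Y \cong_L A/B$ then $M$ itself supplements a factor $L$-isomorphic to $A/B$, so $N \in \mathcal{J}_3$ and the inclusion is immediate. The remaining case, where $X/Y$ is $L$-connected to but not $L$-isomorphic to $A/B$, is the crux; it forces both factors to be non-abelian, so that $M_L = C_L(X/Y)$. Here I would invoke the type-$3$ image witnessing the connection: an ideal $T$ with $L/T$ primitive of type $3$ and minimal ideals $P/T \cong_L A/B$ and $Q/T \cong_L X/Y$, whence $C_L(X/Y) = C_L(Q/T) = P$ by Theorems \ref{t:class} and \ref{t:L-isom}. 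A core-free maximal subalgebra of $L/T$ lifts to a maximal subalgebra $\overline{M}$ of $L$ with $\overline{M}_L = T$ complementing both minimal ideals; in particular $\overline{M}$ supplements $P/T \cong_L A/B$, so $T = \overline{M}_L \in \mathcal{J}_3$. Since $T \subseteq P = M_L = N$, this gives $\bigcap \mathcal{J}_3 \subseteq T \subseteq N$.

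Combining the two established inclusions with the free ones yields $\bigcap \mathcal{J} = \bigcap \mathcal{J}_2$ and $\bigcap \mathcal{J}_2 = \bigcap \mathcal{J}_3$, and with $\mathcal{J}=\mathcal{J}_1$ all four intersections coincide. I expect the main obstacle to be precisely the non-abelian, connected-but-not-$L$-isomorphic case of the third step: there the core $M_L = C_L(X/Y)$ is the centraliser attached to the \emph{other} connected factor, and one must still exhibit an element of $\mathcal{J}_3$ beneath it. The key realisation resolving this is that the common complement in the connecting type-$3$ image supplements the factor that is genuinely $L$-isomorphic to $A/B$, so its core $T$ lies below $C_L(X/Y)$.
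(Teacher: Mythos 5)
Your proof is correct. The paper itself offers no details here --- its entire proof reads ``This follows straightforwardly from Proposition \ref{p:mono}'' --- and your argument is a sound and complete elaboration of exactly the two non-trivial inclusions that sentence hides: handling a type-$3$ core $M_L\in\mathcal{J}_2$ by splitting it as $E_1\cap E_2$ with $E_1,E_2\in\mathcal{J}$, and finding below each $M_L=C_L(X/Y)\in\mathcal{J}$ the core $T\in\mathcal{J}_3$ of the common complement in the connecting type-$3$ quotient.
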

\begin{proof} This follows straightforwardly from Proposition \ref{p:mono}.  
\end{proof}

\begin{theor}\label{t:crown} Let $C/R$ be the crown associated with the supplemented chief factor $A/B$ of $L$. Then $C/R= Soc(L/R)$. Furthermore
\begin{itemize}
\item[(i)] every minimal ideal of $L/R$ is a supplemented chief factor of $L$ which is $L$-connected to $A/B$, and
\item[(ii)] no supplemented chief factor of $L$ above $C$ or below $R$ is $L$-connected to $A/B$.
\end{itemize}
In other words, there are $r$ ideals $A_1, \ldots , A_r$ of $L$ such that
\[ C/R = A_1/R \oplus \ldots \oplus A_r/R
\]
where $A_i/R$ is a supplemented chief factor of $L$ which is $L$-connected to $A/B$ for $i=1, \ldots ,r$ and $r$ is the number of supplemented chief factors of $L$ which are $L$-connected to $A/B$ in each chief series for $L$. Moreover, $\phi(L/R)=0$.
\end{theor}
\begin{proof} Let $R=N_1 \cap \ldots \cap N_r$ where $C/N_i$ are the precrowns associated with chief factors that are $L$-connected to $A/B$ and $r$ is minimal with respect to this property.  Then
\[ \theta : \frac{C}{R} = \frac{C}{N_1 \cap \ldots \cap N_r} \rightarrow \frac{C}{N_1} \oplus \ldots \oplus \frac{C}{N_r}
\] given by $\theta (c+(N_1 \cap \ldots \cap N_r))=(c+N_1, \ldots, c+N_r)$ is an $L$-monomorphism. Moreover, $C=N_i+(N_1 \cap \ldots \cap N_{i-1})$ for $i \leq r$, from the minimality of $r$, and so 
\[ \frac{N_1 \cap \ldots \cap N_{i-1}}{N_1 \cap \ldots \cap N_i} \cong_L \frac{C}{N_i}.
\]
It follows that the chain
\[ R=N_1 \cap \ldots \cap N_r \subseteq N_1 \cap \ldots \cap N_{r-1} \subseteq \ldots \subseteq N_1 \subseteq C
\] is part of a chief series for $L$ in which each chief factor is $L$-connected to $A/B$. Hence $\dim (C/R)= r\dim (A/B)$ and $\theta$ is an isomorphism.
\par

Suppose that $E/F$ is a supplemented chief factor of $L$ which is $L$-connected to $A/B$ and let $M$ be a maximal subalgebra of $L$ that is a supplement of $E/F$ in $L$. Then $E \subseteq C$, by Proposition \ref{p:precrown}. However, $E \not \subseteq R$, since $R \subseteq M_L$. It follows that no supplemented chief factor of $L$ over $C$ or below $R$ is $L$-connected to $A/B$.

By Theorem \ref{t:jordan} the number of supplemented chief factors $L$-connected to $A/B$ in each chief series of $L$ is an invariant of $L$ and coincides with the length of any section of chief series between $R$ and $C$.
\par

Next suppose that $D/R$ is a minimal ideal of $L/R$ and that $D \not \subseteq C$. Then $D \cap C=R$ and $D \subseteq C_L(A_1/R)$. But $A_1/R$ is $L$-connected to $A/B$, so $D \subseteq C_L(A_1/R) \subseteq C$, by Proposition \ref{p:precrown}.
\par

Finally, $\phi(L/R)=0$ since every minimal ideal of $L/R$ is supplemented. 
\end{proof}

\begin{coro}\label{c:Lcon} Two supplemented chief factors of $L$ define the same crown if and only if they are $L$-connected.
\end{coro}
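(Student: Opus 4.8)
The plan is to prove Corollary~\ref{c:Lcon} as a direct consequence of Theorem~\ref{t:crown}, using the characterisation of the crown as a socle together with the invariance statement already established. The corollary asserts an equivalence, so I would split it into the two implications and handle each separately.

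\textbf{The forward direction} ($L$-connected $\Rightarrow$ same crown) should be the easier one, and I would argue it first. Suppose $A_1/B_1$ and $A_2/B_2$ are two supplemented chief factors of $L$ that are $L$-connected. The crown associated with a chief factor is defined via the set ${\mathcal J}$ of cores $M_L$ of monolithic maximal subalgebras supplementing chief factors $L$-connected to that factor, and by Lemma~\ref{l:crown} the denominator $R$ may equally be computed from ${\mathcal J_2}$ (all maximal subalgebras supplementing an $L$-connected factor). Since ``is $L$-connected to'' is an equivalence relation by Theorem~\ref{t:Lcon}, the factors $L$-connected to $A_1/B_1$ are exactly those $L$-connected to $A_2/B_2$. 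Hence the defining sets ${\mathcal J}$ coincide for the two factors, so their intersections $R$ agree; and the numerator $C = A + C_L(A/B)$ likewise agrees because Proposition~\ref{p:precrown} forces $C_1 = C_2$. Therefore the two crowns are literally the same subquotient $C/R$.

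\textbf{The converse} (same crown $\Rightarrow$ $L$-connected) is where the real content lies, and I expect it to be the main obstacle. Here I would invoke the structural conclusion $C/R = \mathrm{Soc}(L/R)$ from Theorem~\ref{t:crown}, together with its part~(i): every minimal ideal of $L/R$ is a supplemented chief factor $L$-connected to $A/B$. So suppose $A_1/B_1$ and $A_2/B_2$ define the same crown $C/R$. Each $A_i/B_i$ is $L$-connected to a minimal ideal of $L/R$ — indeed, by the supplementation data each $A_i$ can be related to one of the summands $A_j/R$ appearing in the decomposition $C/R = A_1/R \oplus \cdots \oplus A_r/R$, and part~(i) tells us every such summand is $L$-connected to $A_i/B_i$. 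The delicate point is to verify that \emph{both} $A_1/B_1$ and $A_2/B_2$ are $L$-connected to a common minimal ideal of $L/R$ (not merely each to \emph{some} summand), after which transitivity of the $L$-connectedness relation (Theorem~\ref{t:Lcon}) closes the argument.

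\textbf{The hard part will be} pinning down this last step cleanly: a priori the two factors might be $L$-connected to different summands of the socle, and one must rule out that $C/R$ mixes together several distinct $L$-connection classes. The key observation to exploit is the final invariance clause of Theorem~\ref{t:crown}, namely that $r$ counts the supplemented chief factors $L$-connected to $A/B$ in any chief series — which shows the entire socle $C/R$ belongs to a \emph{single} $L$-connection class, since $r$ was computed relative to one fixed factor. Thus if $A_2/B_2$ produced the same $C/R$ but lay in a different class, the count $r$ would not match the socle length, a contradiction. Once this is established, all minimal ideals of $L/R$ lie in one class, both $A_i/B_i$ are $L$-connected to any of them, and transitivity gives $A_1/B_1$ $L$-connected to $A_2/B_2$.
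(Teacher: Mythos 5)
Your proposal is correct and follows essentially the same route as the paper, which simply observes that the claim follows from Theorem \ref{t:crown} because the crown is a direct sum of supplemented components all lying in one $L$-connection class. The only remark worth making is that the ``delicate point'' you worry about in the converse is already settled by Theorem \ref{t:crown}(i) applied to each factor in turn: since both factors define the same $C/R$, \emph{every} minimal ideal of $L/R$ is $L$-connected to each of them, so any one summand serves as the common link and transitivity finishes the argument.
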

\begin{proof} This is clear, since the crown associated with a supplemented chief factor is a direct sum of supplemented components.
\end{proof}

\begin{propo}\label{p:rad}  For any Lie algebra $L$ we have
\begin{align} & \cap \{S : \hbox{ there is a non-abelian crown } R/S \hbox{ of } L \} \nonumber \\
 = & \cap  \{M_L : M \hbox{ is maximal in } L \hbox{ and } L/M_L \hbox{ is primitive of type } 2 \} \nonumber \\
 = & \cap  \{M_L : M \hbox{ is maximal in } L \hbox{ and } L/M_L \hbox{ is primitive of type } 2 \hbox{ or } 3 \} \nonumber \\
 = & \cap  \{C : C=C_L(A/B), A/B \hbox{ a non-abelian chief factor of } L \} \nonumber \\
 = & \Gamma, \hbox{ the solvable radical of } L. \nonumber
\end{align}
\end{propo}
\begin{proof} It follows from Lemma \ref{l:crown} and Theorem \ref{t:crown} that the given intersections all yield the same ideal, $J$ say. Let $A/B$ be a chief factor of $L$ below $J$. If $A/B$ is non-abelian we have $A \subseteq J \subseteq C_L(A/B)$, a contradiction, so $J \subseteq \Gamma$. Moreover, if $R/S$ is a non-abelian crown of $L$, then $(S+ \Gamma)/S$ is a solvable ideal of $L/S$ and so is trivial, since $R/S=$ Soc$(L/S)$, by Theorem \ref{t:crown}. It follows that $\Gamma \subseteq S$ , whence $\Gamma \subseteq J$.
\end{proof}

\begin{theor}\label{t:comp} Let $L$ be a solvable Lie algebra, and let $C/R=\bar{C}$ be the crown associated with a supplemented chief factor of $L$. Then $\bar{C}$ is complemented in $\bar{L}$, and any two complements are conjugate by an automorphism of the form $1+ ad\,a$ for some $a \in \bar{C}$.
\end{theor}
\begin{proof} For simplicity we will assume that $R$ has been factored out and write the crown simply as $C$. Then $C=$ Asoc$L$ and $\phi(L)=0$, so $L=C \dot{+} U$ for some subalgebra $U$ of $L$, by \cite[Theorem 7.3]{frat}. 
\par

Let Asoc$L = A_1 \oplus \ldots \oplus A_n$ in $L$, where $A_i$ is a minimal ideal of $L$ for $i=1, \ldots, n$. Then Asoc$L=N$, where $N$ is the nilradical of $L$, by \cite[Theorem 7.4]{frat}, and $C_L(N)=N$. Now $A_i \cong_L A_j$ for each $1 \leq i,j \leq n$, and so $C_L(A_i)=N$ for $i=1, \ldots,n$. Let $D/N$ be a minimal ideal of $L/N$. Then there exists $d \in D$ which does not act nilpotently on $N$. Let $L=E_L(d)\dot{+}L_1$ be the Fitting decomposition of $L$ relative to ad\,$d$. Clearly $L_1 \subseteq N$, and $L_1$ is an ideal of $L$. Without loss of generality we can assume that $L_1=A_1 \oplus \ldots \oplus A_r$, where $r \leq n$. Since $[L_1,d]=L_1$ it follows that $[A_i,d]=A_i$ for each $i=1, \ldots,r$, whence $[N,d]=N$, since the $A_i's$ are $L$-isomorphic to each other. Thus $L_1=N$ and we can assume that $U=E_L(d)$.
\par

Let $V$ be another complement of $C$ in $L$. Then there exists $v \in V$ such that $v=d+n$ for some $n \in N$. Now $N=[N,d]$, so $n=[a,d$] for some $a \in N$. Thus $v=d+[a,d]=d(1+$ad\,$a)$. But $1+$ad\,$a$ is an automorphism of $L$, and so 
\[  U(1+ \hbox{ ad }a) = E_L(d)(1+ \hbox{ ad }a) = E_L(v) \supseteq V,
\]
since $v$ corresponds to $d$ in the isomorphism $V \rightarrow L/N \rightarrow U$. But $E_L(v)$ and $V$ are both complements to $N$ in $L$, and so $E_L(v)=V$, and the result follows.
\end{proof}
\bigskip

Let 
\[ 0 = L_0 \subset L_1 \subset \ldots \subset L_n = L  
\] be a chief series for $L$. We define the set $\mathcal{I}$ by $i \in \mathcal{I}$ if and only if $L_i/L_{i-1}$ is not a Frattini chief factor of $L$. For each $i \in \mathcal{I}$ put
\[ \mathcal{M}_i = \{ M \hbox{ is a maximal subalgebra of } L \colon L_{i-1} \subseteq M \hbox{ but } L_i \not \subseteq M\}.
\]
Then $B$ is a {\em prefrattini} subalgebra of $L$ if 
\[ B = \bigcap_{i \in \mathcal{I}} M_i \hbox{ for some } M_i \in \mathcal{M}_i.
\]
It was shown in \cite{prefrat} that the definition of prefrattini subalgebras does not depend on the choice of chief series.

\begin{theor}\label{t|:prefrat} Let $L$ be a solvable Lie algebra. Then the prefrattini subalgebras of $L$ are precisely the intersections of the complements of the crowns, one complement being taken from each crown.
\end{theor}
\begin{proof} Let $L_i/L_{i-1}$ be a non-Frattini, and hence supplemented, chief factor of $L$, and let its crown $C/R=A_1/R \oplus \ldots \oplus A_r/R$ be complemented by $K/R$. Then $M_i = A_1 \oplus \ldots \oplus \hat{A_i} \oplus \ldots \oplus A_r \dot{+} K$ (where the `hat' is over a term that is omitted from the sum) is a maximal subalgebra of $L$ such that $R \subseteq M_i$ but $A_i \not \subseteq M_i$. It is clear that if we intersect all such subalgebras over each of the crowns then we get a prefrattini subalgebra of $L$, and that this intersection is equal to the intersection of the complements $K$, one for each crown.
\par

Moreover, if $M$ is a maximal subalgebra with $R \subseteq M$ but $A_i \not \subseteq M$, then $L=A_i \dot{+} M$ and $M \cong L/A_i \cong A_1 \oplus \ldots \oplus \hat{A_i} \oplus \ldots \oplus A_r \dot{+} K$. It follows that we get all prefrattini subalgebras this way. 
\end{proof}

\begin{theor}\label{t:cap} Let $L$ be a solvable Lie algebra, $A/B$ a supplemented chief factor of $L$, $C/R$ the crown associated with $A/B$ and $K/R$ a complement for $C/R$. Then $K$ avoids every chief factor that is $L$-connected to $A/B$ and covers the rest. 
\end{theor}
\begin{proof} We have $L=C+K$, $C \cap K=R$ and $C^2 \subseteq R \subset K$. Let $E/F$ be a supplemented chief factor of $L$. Then $F+[C,E]=F$ or $E$. Suppose first that  $F+[C,E]=F$.  Then $[C,E] \subseteq F$. Note that this case must occur if $E/F$ is $L$-connected to $A/B$, since then $C = C_L(A/B) =C_L(E/F) $. But now $F+K \cap E$ is an ideal of $L$ and so $F+K \cap E=F$ or $E$. The former implies that $K \cap E \subseteq K \cap F$, whence $K \cap E = K \cap F$ and $K$ avoids $E/F$.
\par

 The latter yields that $F+K=E+K$ and $K$ covers $E/F$. In this case, if $E/F$ is $L$-connected to $A/B$ we have $R \subseteq F$ and  $E = F+K \cap E$. But $K \cap E \subseteq K \cap C=R$, from which $E \subseteq F+R=F$, a contradiction. So this case only occurs when $E/F$ is not $L$-connected to $A/B$. 
\par

The remaining possibility (which also only occurs when $E/F$ is not $L$-connected to $A/B$) is that  $F+[C,E]=E$. Then $[C,E] \subseteq F+[C,[C,E]] \subseteq F+C^2 \subseteq F+R \subseteq F+K$, giving $E \subseteq F+K$. Thus $E+K=F+K$ and $K$ covers $E/F$. 
\end{proof}

\end{document}